\newlength\shlength
\newcommand\xshlongvec[2][0]{\setlength\shlength{#1pt}%
  \stackengine{-5.6pt}{$#2$}{\smash{$\kern\shlength%
    \stackengine{7.55pt}{$\mathchar"017E$}%
      {\rule{\widthof{$#2$}}{.57pt}\kern.4pt}{O}{r}{F}{F}{L}\kern-\shlength$}}%
      {O}{c}{F}{T}{S}}
\theoremstyle{definition}
\newtheorem{theorem}{Theorem}[section]
\newtheorem{lemma}[theorem]{Lemma}
\newtheorem{proposition}[theorem]{Proposition}
\newtheorem{definition}[theorem]{Definition}
\newtheorem{example}[theorem]{Example}
\newcommand{\ovl}[1]{\overline{#1}}	
\newcommand{\parn}[1]{\left( #1 \right)}
\newcommand{\conv}[1]{\mathrm{conv}\parn{#1}}	
\newcommand{\Z}{{\mathbb{Z}}}
\newcommand{\R}{{\mathbb{R}}}
\newcommand{\C}{{\mathbb{C}}}
\newcommand{\N}{{\mathbb{N}}}
\renewcommand{\Re}{\ensuremath{\mathrm{Re}}}
\renewcommand{\Im}{\ensuremath{\mathrm{Im}}}
\title[Numerical range of a tridiagonal operator]{The numerical range of  a  periodic tridiagonal operator reduces to the
 numerical range of a finite matrix}
\date{January 2021}
\author{Benjam\'in A.~Itz\'a-Ortiz}
\author{Rub\'en A.~Mart\'inez-Avenda\~no}
\author{Hiroshi Nakazato}
\address{Centro de Investigaci\'on en Matem\'aticas, Universidad Aut\'onoma del Estado de Hidalgo, Pachuca, Hidalgo, Mexico}
\address{Departamento Acad\'emico de Matem\'aticas, Instituto Tecnol\'ogico Aut\'onomo de M\'exico, Mexico City, Mexico}
\address{Department of Mathematics and Physics, Hirosaki University, Hirosaki City, Japan}
\thanks{The second author's research is partially supported by the Asociaci\'on Mexicana de Cultura A.C.}
\dedicatory{Dedicated to the memory of Rudolf Kippenhahn (1926--2020)}
\begin{document}
\setlength\arraycolsep{2pt}

\begin{abstract}
In this paper we show that the closure of the numerical range of an $n+1$-periodic tridiagonal operator is equal to the numerical range of a $2(n+1) \times   2(n+1)$ complex matrix.
\end{abstract}

\maketitle

\section*{Introduction}

Consider  $\mathcal A$ to be a finite set of complex numbers and let  $a=(a_i)_{i\in\Z}$ be a biinfinite sequence in the total shift space $\mathcal A^\Z$.  In \cite{HI-O2016}, the tridiagonal operator $A_a\colon \ell^2(\Z)\to\ell^2(\Z)$  associated to $a$ is defined as
 \begin{equation}\label{eq:Ab}
 A_{a}= 
\begin{pmatrix}
\ddots & \ddots & & & & & \\
\ddots & 0 & 1 & & & & \\
& a_{-2} & 0 & 1 & & & \\
& & a_{-1} & \framebox[0.4cm][l]{0} & 1 & & \\
& & & a_{0} & 0 & 1 & \\
& & & & a_{1} & 0 & \ddots \\
& & & & & \ddots & \ddots
\end{pmatrix}
\end{equation}
where the square marks the matrix entry at $(0,0)$. In the particular case of the alphabet $\mathcal A=\{-1,1\}$,  the corresponding operator $A_a$ is related to the so called ``hopping sign model'' introduced in \cite{Feinberg} and subsequently studied in many other works, such as \cite{BebianoEtAl,CWChLi10,
CWChLi13,ChD, CN2011,CS, HaggerJFA, Hagger,HI-O2016}, just to name a few. 
On the other hand, when the alphabet is $\mathcal A=\{0,1\}$ some results for computing the  numerical range of $A_a$ are presented 
in \cite{HI-O2016,IM}. In particular, work in \cite{IM}  addresses the case when $a$ is  an $n+1$-periodic  sequence. Relying on the fact that the 
closure of the numerical range of $A_a$ may be written as the closure of the convex hull of an uncountable union of numerical ranges of certain
 matrices, in \cite{IM} the closure of the numerical range of the 2-periodic case is computed by substituting such uncountable union of numerical
 ranges by the convex hull of the union of the numerical ranges of just two $2\times 2$  matrices.
In this work, we further contribute to  the study of  the numerical range of $A_a$ when $a$ is an $n+1$ periodic biinfinite
 sequence. 
 
Instead of working with the operators $A_a$, we work with the more general tridiagonal operators $T=T(a,b,c)$ defined in Section 2, since, as can be
 seen in \cite{IM}, the computation of the closure of the numerical range of $A_a$ is a particular case of that of $T$. Using a result of
 Plaumann and Vinzant \cite{PV}, we show that the closure of the numerical range of the $n+1$ periodic tridiagonal operator $T$ is the numerical 
range of a $2(n+1) \times 2(n+1)$ matrix (cf. Theorem 2.6). 

We divide this work in two sections. In Section~1 we briefly introduce the notation and terminologies needed in the rest of the paper. In Section~2
 we develop the  required machinery, first by computing the Kippenhahn polynomial of the symbol of $n+1$ periodc tridiagonal operatos $T$ on
 $\ell^2(\N_0)$ and then by combining our computations with results of Plaumann and Vizant. We will conclude that the closure of the numerical 
range of $T$ is equal to the numerical range of a $2(n+1)\times 2(n+1)$ matrix $A$. Furthermore, we provide some examples where $A$ can be explicitly computed and we show that the size of $A$ is optimal.

\section{Preliminaries}

In this section we introduce the notation required which will be needed in the following sections. As usual, the symbols $\N$, $\N_0$, $\Z$, $\R$ and $\C$ will denote the set of positive integers, the sets of nonnegative integers, the set of integers, the set of real numbers and the set of complex numbers, respectively.

For a given $n\in\N$, let $a$, $b$ and $c$ be $(n+1)$-periodic infinite sequences in $\mathcal A^{\N_0}$. We will denote by $T=T(a,b,c)$ the  $(n+1)$-periodic tridiagonal operator on $\ell^2(\N_0)$ given by
\[
T= 
\begin{pmatrix}
b_0 & c_0 & & & & & & & &\\
a_1 & b_1 & c_1 & & & & & & & \\
    & a_{2} & b_2 & c_2 & & & & & & \\
    &       & \ddots & \ddots  & \ddots & & & & & \\
    & & & a_{n} & b_n & c_n & & & &\\
    & & & & a_{0} & b_0 & c_0 & & &\\
    & & & & & \ddots & \ddots & \ddots & &\\
    & & & & & &a_{n-1} & b_{n-1}& c_{n-1} & \\
    & & & & & & &  a_n& b_n  & c_n &\\
    & & & & & & & & \ddots &  \ddots&\ddots
\end{pmatrix}.
\]

We should observe that $T$ is a bounded operator since the sum of the moduli of the entries in each column (and in each row) is uniformly bounded (see, e.g., \cite[Example 2.3]{Kato}). The biinfinite matrix $A_a$ is also a bounded operator, as long as the biinfinite sequence $a$ arises from a finite alphabet.

If $n>1$, for each $\phi \in [0, 2\pi)$, following~\cite{BebianoEtAl,IM} we define the symbol of $T$,  as the following  $(n+1)\times (n+1)$ matrix
\begin{equation}\label{eq:symbol_n}
T_{\phi}= 
\begin{pmatrix}
 b_0 & c_0 & 0 & & & 0 & a_0 e^{-i\phi}\\
a_1 & b_1 & c_1 & 0 & & & 0\\
0   & a_{2} & b_2 & c_2 & 0 & & \\
    & \ddots & \ddots &\ddots & \ddots & \ddots   & \\
& & 0 & a_{n-2} & b_{n-2} & c_{n-2} & 0\\
0 & & & 0 & a_{n-1} & b_{n-1} & c_{n-1} \\
c_n e^{i\phi} & 0 & & & 0 & a_n & b_n
\end{pmatrix}; 
\end{equation}
while the symbol  of $T$ for $n=1$ is the $2\times 2$ matrix 
\begin{equation}\label{eq:symbol_2}
    T_{\phi}=
        \begin{pmatrix}
        b_0 & c_0+a_0e^{-i\phi}\\ 
        a_1+c_1 e^{i\phi} & b_1  
        \end{pmatrix}.
\end{equation}

Recall that given a Hilbert space $\mathcal H$ and a bounded operator $A$ on it, the numerical range is defined as the set 
\[
W(A)= \{ \left< A x, x \right> \, : \, \| x \|=1 \}.
\]
The Toeplitz-Haussdorf Theorem establishes that $W(A)$ is a bounded convex subset of $\C$ (closed, if the Hilbert space is finite dimensional) and hence the closure of the numerical range can be seen as the intersection of the closed half-spaces containing the numerical range.

Kippenhahn~\cite{Kip} (see also \cite{ZH}) characterized two vertical support lines of $W(A)$ for a given $n \times n$ matrix as
 $\Re(z)=\lambda_1(A)$ and $\Re(z)=\lambda_n(A)$, where $\lambda_1(A)$ and $\lambda_n(A)$ are the respective largest and least eigenvalues
 of $\Re(A)$ (recall that $\Re(A):=\frac{1}{2} (A + A^*)$ and $\Im(A):=\frac{1}{2i} (A - A^*)$). In fact, if $\alpha \in W(A)$ then 
$\lambda_n(A) \leq \Re(\alpha) \leq \lambda_1(A)$ (and the equalities hold for some points $\alpha_1, \alpha_2 \in W(A)$). Since $e^{i \theta} W(A)=W(e^{i \theta} A)$ for each $\theta \in [0, 2\pi)$, it follows that if  $\alpha \in W(A)$, then $e^{-i\theta} \alpha \in W(e^{-i\theta} A)$ and hence $\Re(e^{-i\theta} \alpha) \leq \lambda_1(e^{-i \theta} A)$. It follows that the lines $\Re(e^{-i\theta} z) = \lambda_1(e^{-i \theta} A)$ are support lines of $W(A)$. Hence the convex set $W(A)$ is uniquely determined by the numbers $\lambda_1(e^{-i\theta} A)$, as $\theta$ varies on the interval $[0, 2\pi)$; i.e. $W(A)$ is determined by the largest eigenvalue of $\Re(e^{-i \theta} A)$, which equals $\cos(\theta) \Re(A) + \sin(\theta) \Im(A)$. Thus the numerical range is determined by the largest roots of the family of characteristic polynomials
\[
\det(t I_n - \cos(\theta) \Re(A) - \sin(\theta) \Im(A)).
\]
The homogeneous polynomial $F_A(t,x,y)=\det(t I_n + x \Re(A) + y \Im(A))$ is called the Kippenhahn polynomial of the matrix $A$. It clearly follows
 that two matrices have the same numerical range if their Kippenhahn polynomials coincide. 
Furthermore, 
\[
\max\{ t \in \R \, : \, F_A(t,-\cos(\theta),-\sin(\theta))=0 \} = \max\{ \Re(e^{-i \theta} z) \, : \, z \in W(A)\}
\]
for each $\theta \in [0,2\pi)$.

\section{The Kippenhahn polynomial of the symbol $T\sb\phi$}

In this section, after some preliminary work, we show that the closure of the numerical range of a $n+1$-periodic tridiagonal operator $T$ is the numerical range of a $2(n+1) \times 2(n+1)$ matrix. 

We will need the following lemma.

\begin{lemma}\label{le:laplace}
Consider the $(n+1)\times(n+1)$ ``almost tridiagonal'' matrix
\[
\Lambda=\begin{pmatrix}
\lambda_{1,1} & \lambda_{1,2} & 0 &  0 & \dots & 0 & 0 & \lambda_{1,n+1} \\
\lambda_{2,1} & \lambda_{2,2} & \lambda_{2,3} & 0 &  \dots & 0 & 0 & 0\\
0 & \lambda_{3,2} & \lambda_{3,3} & \lambda_{3,4} & \dots & 0 & 0 & 0  \\
0 & 0 & \lambda_{4,3} & \lambda_{4,4} & \dots & 0 & 0 & 0 \\
\vdots & \vdots & \vdots & \vdots &\ddots &\vdots &\vdots &\vdots \\
0 & 0 & 0 & 0 & \dots & \lambda_{n-1,n-1} & \lambda_{n-1,n} & 0  \\
0 & 0 & 0 & 0 & \dots & \lambda_{n,n-1} & \lambda_{n,n} & \lambda_{n,n+1}  \\
\lambda_{n+1,1} & 0 & 0 & 0 & \dots & 0 & \lambda_{n+1,n} & \lambda_{n+1,n+1}  
\end{pmatrix},
\]
where every $\lambda_{i,j} \in \C$. Then, $\det(\Lambda)$ equals
\begin{equation*}
    \begin{split}
&
\det\begin{pmatrix}
\lambda_{1,1} & \lambda_{1,2} & 0 &  \dots & 0 & 0 \\
\lambda_{2,1} & \lambda_{2,2} & \lambda_{2,3} &  \dots & 0 & 0\\
0 & \lambda_{3,2} & \lambda_{3,3} &  \dots & 0 & 0  \\
\vdots & \vdots & \vdots &\ddots &\vdots &\vdots \\
0 & 0 & 0 & \dots & \lambda_{n-1,n} & 0  \\
0 & 0 & 0 & \dots & \lambda_{n,n} & \lambda_{n,n+1}  \\
0 & 0 & 0 & \dots & \lambda_{n+1,n} & \lambda_{n+1,n+1}
\end{pmatrix} 
- \lambda_{1,n+1} \lambda_{n+1,1} 
\det \begin{pmatrix}
\lambda_{2,2} & \lambda_{2,3} & \dots & 0 & 0 \\
\lambda_{3,2} & \lambda_{3,3} &  \dots & 0 & 0 \\
\vdots & \vdots &\ddots &\vdots &\vdots  \\
0 & 0 & \dots & \lambda_{n-1,n-1} & \lambda_{n-1,n}  \\
0 & 0 & \dots & \lambda_{n,n-1} & \lambda_{n,n}  
\end{pmatrix} \\
\\
&+ (-1)^{n} \lambda_{n+1,1} \lambda_{1,2} \lambda_{2,3} \cdots \lambda_{n-1,n} \lambda_{n,n+1} 
+ (-1)^{n}\lambda_{1,n+1} \lambda_{2,1} \lambda_{3,2} \cdots \lambda_{n,n-1} \lambda_{n+1,n}.
\end{split}
\end{equation*}
\end{lemma}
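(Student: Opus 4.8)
The plan is to compute $\det(\Lambda)$ by Laplace (cofactor) expansion along the first row, which has only three nonzero entries: $\lambda_{1,1}$, $\lambda_{1,2}$, and $\lambda_{1,n+1}$. This produces three terms. The $\lambda_{1,1}$ term, together with the cofactor obtained by deleting row $1$ and column $1$, should be recognized as contributing to the first determinant in the statement (the one where the matrix has become genuinely ``staircase'' by losing the top-left corner entry but keeping the bottom-left $\lambda_{n+1,1}$ term... let me be careful: actually the first displayed determinant is the $n\times n$ minor structure, so I should instead think of the first two terms of the row expansion differently).

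Let me restate: expanding along row $1$,
\[
\det(\Lambda) = \lambda_{1,1}\,M_{1,1} - \lambda_{1,2}\,M_{1,2} + (-1)^{1+(n+1)}\lambda_{1,n+1}\,M_{1,n+1},
\]
where $M_{1,j}$ is the minor deleting row $1$ and column $j$. The minor $M_{1,2}$ is itself almost-tridiagonal (it has the entry $\lambda_{n+1,1}$ sitting in its bottom-left corner and $\lambda_{2,3},\lambda_{3,4},\dots$ along a superdiagonal); I would expand $M_{1,2}$ along \emph{its} first column, which has only the entries $\lambda_{2,1}$ (top) and $\lambda_{n+1,1}$ (bottom). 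The $\lambda_{2,1}$-branch, after removing it, leaves a clean tridiagonal block whose determinant combines with the $\lambda_{1,1}M_{1,1}$ term and a piece of the $\lambda_{1,n+1}M_{1,n+1}$ term to assemble exactly the first big determinant in the statement (expansion of that determinant along its own first row/column reproduces these). The $\lambda_{n+1,1}$-branch of $M_{1,2}$ is upper/lower triangular up to a single anti-diagonal run, giving the monomial $\lambda_{1,2}\lambda_{2,3}\cdots\lambda_{n-1,n}\lambda_{n,n+1}\lambda_{n+1,1}$ with the sign $(-1)^n$ after bookkeeping. Symmetrically, expanding $M_{1,n+1}$ along its first column splits off the second monomial $\lambda_{1,n+1}\lambda_{2,1}\lambda_{3,2}\cdots\lambda_{n,n-1}\lambda_{n+1,n}$ and the $-\lambda_{1,n+1}\lambda_{n+1,1}$ times the central tridiagonal minor.

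So concretely the steps are: (1) Laplace expansion of $\det(\Lambda)$ along the first row into three terms; (2) for the $\lambda_{1,2}$-term, a secondary expansion of its cofactor along the first column, producing a tridiagonal-times-stuff term plus a pure monomial; (3) for the $\lambda_{1,n+1}$-term, the analogous secondary expansion along the first column, again a monomial plus $(-\lambda_{1,n+1}\lambda_{n+1,1})$ times the central $(n-1)\times(n-1)$ tridiagonal minor; (4) recognize that the $\lambda_{1,1}$-term plus the ``tridiagonal'' leftovers from steps (2)--(3) reassemble, via the cofactor expansion of the first displayed determinant along its first row, into precisely that first determinant. Then collect signs.

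**The main obstacle** is purely the sign bookkeeping: tracking the factors $(-1)^{i+j}$ through two nested Laplace expansions and a final reassembly, and verifying that the anti-diagonal monomials (which involve a permutation that is a single $(n+1)$-cycle, hence sign $(-1)^n$) come out with the stated $(-1)^n$. A clean way to avoid brute force is to argue structurally: $\det(\Lambda)$, viewed as a polynomial, equals $\det(\Lambda_0)$ — where $\Lambda_0$ is $\Lambda$ with the two corner entries $\lambda_{1,n+1},\lambda_{n+1,1}$ set to $0$, i.e.\ the genuine tridiagonal matrix — plus the terms in the Leibniz expansion that actually use at least one of those corner entries. A corner entry $\lambda_{1,n+1}$ forces, in any surviving permutation, a specific threading; either both corners are used (giving the full $(n+1)$-cycle, hence the two monomials, one per corner, each of sign $(-1)^n$), or exactly one corner is used together with a transposition-type detour that reconstructs $-\lambda_{1,n+1}\lambda_{n+1,1}$ — wait, that can't happen with only one corner; I'll instead note that using exactly one corner entry, say $\lambda_{1,n+1}$, together with the constraint that the permutation is a bijection, forces also $\lambda_{n+1,1}$ unless one instead routes through a single long descending chain. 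The careful enumeration of which permutations of $\{1,\dots,n+1\}$ have nonzero Leibniz contribution for this sparsity pattern is exactly the content, and doing it by the nested-expansion route above is the most transparent; I would present it that way and relegate the sign verification to a short explicit check for the $(n+1)$-cycle.
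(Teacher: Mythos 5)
Your proposal is correct and is essentially the paper's (unwritten) proof: the authors merely assert that the identity follows from multilinearity and the Laplace expansion, and your nested expansion --- along row $1$, then along the first columns of the cofactors $M_{1,2}$ and $M_{1,n+1}$ --- carries this out, producing the two $(n+1)$-cycle monomials with sign $(-1)^n$ and the $-\lambda_{1,n+1}\lambda_{n+1,1}$ times the central minor. One small slip in your description: the first displayed determinant reassembles from $\lambda_{1,1}M_{1,1}$ together with the $\lambda_{2,1}$-branch of $-\lambda_{1,2}M_{1,2}$ alone, via the standard three-term recurrence for tridiagonal determinants; no piece of the $\lambda_{1,n+1}M_{1,n+1}$ term enters it (that cofactor contributes only the second monomial and the central-minor term).
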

\begin{proof}
 This follows by a long (but straightforward) application of the multilinearity of the determinant function and the Laplace Expansion Theorem.
\end{proof}

Let us set the following notation for the rest of this paper. For $0 \leq j < n$ we define
 \begin{equation*}
     \alpha_j=\frac{c_j+\overline{a_{j+1}}}{2},\quad \gamma_j=\frac{c_j-\overline{a_{j+1}}}{2i}
     \end{equation*}
     and
\begin{equation*}     
     \alpha_n=\frac{a_0+\overline{c_n}}{2},\quad
     \gamma_n=\frac{a_0-\overline{c_n}}{2i}.
 \end{equation*}

We now find an expression for the Kippenhahn polynomial $F_{T_\phi}$ of the symbol matrix $T_{\phi}$ of an arbitrary $n+1$-periodic tridiagonal  matrix $T$ acting on $\ell^2(\N_0)$, involving the determinants of some tridiagonal matrices. This expression will be useful in what follows. 

\begin{proposition}\label{prop:kipp}
Let $n\in\N$. Consider the symbol $T\sb\phi$, that is, the $(n+1)\times(n+1)$ matrix defined as in \eqref{eq:symbol_n} for $n\geq 2$ and as in \eqref{eq:symbol_2} for $n=1$. Then the Kippenhahn polynomial of $T_\phi$ is equal to
 \begin{align*}
F_{T_\phi}(t,x,y)={}& G_{n}(t, x, y) - |\alpha_n x +\gamma_n y|^2 H_{n}(t, x, y) 
  + 2(-1)^n \  \Re\left( \left(\overline{\alpha_n} x +\overline{\gamma_n} y\right) {\prod_{j=0}^{n-1}} (\alpha_j x +\gamma_j y)\right) \cos \phi \\ 
  & -2 (-1)^n \  
    \Im\left( \left(\overline{\alpha_n} x +\overline{\gamma_n} y\right) { \prod_{j=0}^{n-1}}  (\alpha_j x +\gamma_j y)\right) \sin \phi,
    \end{align*}
    where $G_n(t,x,y)$ is the determinant of the tridiagonal $(n+1) \times (n+1)$ matrix
     \[
\begin{pmatrix}
    \lambda_{1,1} & \lambda_{1,2} & 0  & 0 &  \ldots &  0 &0 \\
    \lambda_{2,1} & \lambda_{2,2} & \lambda_{2,3} & 0  & \cdots  &0 &  0  \\
    0 & \lambda_{3,2} & \lambda_{3,3} & \lambda_{3,3}  & \cdots  & 0 &  0  \\
    0 & 0 & \lambda_{4,3} & \lambda_{4,4}  & \cdots  &  0  & 0\\
       \vdots & \vdots & \vdots & \vdots & &\vdots &\vdots  \\
       0  &  0  &   0 &  0 & \cdots&    \lambda_{n, n} & \lambda_{n, n+1} \\
       0  &  0  &   0 &  0 & \cdots &    \lambda_{n+1, n} & \lambda_{n+1, n+1} 
\end{pmatrix},
\]
and, where we set $H_n(t,x,y)=1$ when $n=1$, and, for $n\geq 2$, we set $H_n(t,x,y)$ to be the determinant of $(n-1) \times (n-1)$ tridiagonal matrix  
  \[
\begin{pmatrix}
      \lambda_{2,2} & \lambda_{2,3} & 0  &  \cdots & 0 & 0  \\
      \lambda_{3,2} & \lambda_{3,3} & \lambda_{3,4} &  \cdots  & 0 &  0  \\
      0  & \lambda_{4,3} & \lambda_{4,4} & \cdots  &  0 & 0\\
       \vdots & \vdots & \vdots & & \vdots & \vdots \\ 
       0  &  0 & 0  &  \cdots &    \lambda_{n-1, n-1} & \lambda_{n-1, n} \\
              0 & 0 &  0  &   \cdots &    \lambda_{n, n-1} & \lambda_{n, n} 
\end{pmatrix}.
\]
Here we have set, for $1 \leq j \leq n+1$,
\[
\lambda_{j,j}=t +\Re(b_{j-1}) x +\Im(b_{j-1}) y,
\]
and for $ 1 \leq j \leq n$, 
\[
\lambda_{j,j+1}=\alpha_{j-1} x + \gamma_{j-i} y \quad \text{ and} \quad \lambda_{j+1,j}=\ovl{\alpha_{j-1}} x + \ovl{\gamma_{j-1}} y.
\]
\end{proposition}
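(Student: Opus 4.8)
The plan is to compute $F_{T_\phi}(t,x,y) = \det(tI_{n+1} + x\,\Re(T_\phi) + y\,\Im(T_\phi))$ directly, by first writing down the matrix $M := tI_{n+1} + x\,\Re(T_\phi) + y\,\Im(T_\phi)$ explicitly and then applying Lemma~\ref{le:laplace}. The first step is bookkeeping: $\Re(T_\phi) = \tfrac12(T_\phi + T_\phi^*)$ and $\Im(T_\phi) = \tfrac{1}{2i}(T_\phi - T_\phi^*)$, so the $(j,j)$ entry of $M$ is $t + \Re(b_{j-1})x + \Im(b_{j-1})y = \lambda_{j,j}$, the $(j,j+1)$ entry (for $1\le j\le n$) is $\tfrac12(c_{j-1} + \overline{a_j})x + \tfrac{1}{2i}(c_{j-1}-\overline{a_j})y = \alpha_{j-1}x + \gamma_{j-1}y = \lambda_{j,j+1}$, the $(j+1,j)$ entry is its conjugate $\overline{\alpha_{j-1}}x + \overline{\gamma_{j-1}}y = \lambda_{j+1,j}$, and the two corner entries come from $a_0 e^{-i\phi}$ in position $(1,n+1)$ and $c_n e^{i\phi}$ in position $(n+1,1)$: the $(1,n+1)$ entry is $\tfrac12(a_0 e^{-i\phi} + \overline{c_n}e^{-i\phi})x + \tfrac{1}{2i}(a_0 e^{-i\phi} - \overline{c_n}e^{-i\phi})y = e^{-i\phi}(\alpha_n x + \gamma_n y)$, and the $(n+1,1)$ entry is $e^{i\phi}(\overline{\alpha_n}x + \overline{\gamma_n}y)$. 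So $M$ has exactly the ``almost tridiagonal'' shape of $\Lambda$ in Lemma~\ref{le:laplace}, with $\lambda_{1,n+1} = e^{-i\phi}(\alpha_n x + \gamma_n y)$ and $\lambda_{n+1,1} = e^{i\phi}(\overline{\alpha_n}x + \overline{\gamma_n}y)$.

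Next I would feed this into Lemma~\ref{le:laplace}. The first determinant in the lemma's formula, with the corners removed, is precisely the tridiagonal matrix defining $G_n(t,x,y)$ (one checks the displayed $G_n$-matrix is the $\Lambda$ with its two corner entries deleted). The second term is $-\lambda_{1,n+1}\lambda_{n+1,1}$ times the determinant of the central $(n-1)\times(n-1)$ tridiagonal block, which is exactly $H_n(t,x,y)$ (and equals $1$ when $n=1$, matching the convention); and $\lambda_{1,n+1}\lambda_{n+1,1} = e^{-i\phi}(\alpha_n x+\gamma_n y)\cdot e^{i\phi}(\overline{\alpha_n}x+\overline{\gamma_n}y) = |\alpha_n x + \gamma_n y|^2$, which is real and $\phi$-independent, giving the second summand $-|\alpha_n x + \gamma_n y|^2 H_n(t,x,y)$. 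The two remaining terms of the lemma are the ``wrap-around'' products. The first is $(-1)^n \lambda_{n+1,1}\lambda_{1,2}\lambda_{2,3}\cdots\lambda_{n,n+1} = (-1)^n e^{i\phi}(\overline{\alpha_n}x+\overline{\gamma_n}y)\prod_{j=0}^{n-1}(\alpha_j x + \gamma_j y)$, and the second is $(-1)^n \lambda_{1,n+1}\lambda_{2,1}\lambda_{3,2}\cdots\lambda_{n+1,n} = (-1)^n e^{-i\phi}(\alpha_n x + \gamma_n y)\prod_{j=0}^{n-1}(\overline{\alpha_j}x + \overline{\gamma_j}y)$. Since $x,y$ are real variables, these two terms are complex conjugates of each other up to the $e^{\pm i\phi}$ factors; writing $P := (\overline{\alpha_n}x+\overline{\gamma_n}y)\prod_{j=0}^{n-1}(\alpha_j x + \gamma_j y)$, their sum is $(-1)^n(e^{i\phi}P + e^{-i\phi}\overline{P}) = 2(-1)^n\Re(e^{i\phi}P) = 2(-1)^n\big(\Re(P)\cos\phi - \Im(P)\sin\phi\big)$, which is exactly the last two summands in the claimed formula for $F_{T_\phi}$.

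I would handle the $n=1$ case separately but briefly: for $T_\phi$ the $2\times 2$ matrix in \eqref{eq:symbol_2}, one computes $F_{T_\phi}(t,x,y) = \det\!\begin{pmatrix}\lambda_{1,1} & \lambda_{1,2} \\ \lambda_{2,1} & \lambda_{2,2}\end{pmatrix}$ where now the single off-diagonal entry of $M$ in position $(1,2)$ is $\tfrac12(c_0 + \overline{a_1})x + \tfrac1{2i}(c_0 - \overline{a_1})y + e^{-i\phi}(\alpha_1 x + \gamma_1 y) = \alpha_0 x + \gamma_0 y + e^{-i\phi}(\alpha_1 x + \gamma_1 y)$, and its conjugate in position $(2,1)$; expanding the $2\times 2$ determinant and separating the $\phi$-independent part $\lambda_{1,1}\lambda_{2,2} - |\alpha_0 x + \gamma_0 y|^2 = G_1(t,x,y) - |\alpha_1 x + \gamma_1 y|^2\cdot 1$ — wait, one must be careful with the indexing here, so I would just verify that it reproduces the stated formula with $H_1 \equiv 1$ and the cross terms $\alpha_0,\gamma_0$ appearing inside $G_1$. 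The main obstacle is not conceptual but notational: keeping the index shifts consistent (the $\lambda_{j,j}$ involve $b_{j-1}$, the $\lambda_{j,j+1}$ involve $\alpha_{j-1},\gamma_{j-1}$, and the corner $\alpha_n,\gamma_n$ are defined from $a_0$ and $c_n$ rather than from a single pair), and correctly tracking the sign $(-1)^n$ coming from the length-$(n+1)$ cyclic permutation in the two wrap-around terms of Lemma~\ref{le:laplace}; I would double-check these signs on a small case ($n=2$ or $n=3$) before writing the general argument.
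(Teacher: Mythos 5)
Your proposal is correct and follows essentially the same route as the paper: compute the entries of $tI_{n+1}+x\,\Re(T_\phi)+y\,\Im(T_\phi)$, identify it as the ``almost tridiagonal'' matrix of Lemma~\ref{le:laplace} with corner entries $e^{\mp i\phi}(\alpha_n x+\gamma_n y)$ and $e^{\pm i\phi}(\overline{\alpha_n}x+\overline{\gamma_n}y)$, and combine the two wrap-around terms into $2(-1)^n\Re(e^{i\phi}P)$, treating $n=1$ by a direct $2\times 2$ computation. The indexing point you flag in the $n=1$ case resolves exactly as you suspect (the $\phi$-independent part is $G_1-|\alpha_1x+\gamma_1y|^2$ with $H_1\equiv 1$), matching the paper's own calculation.
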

\begin{proof}
We divide the proof in two cases. For $n+1=2$, by computing the real and imaginary parts of the matrix $T_\phi$ in \eqref{eq:symbol_2}, we obtain that the $2 \times 2$ matrix $t I_2 +x \Re(T_{\phi}) +y \Im(T_{\phi})$ is given by 
\begin{equation*}
\begin{pmatrix} t +\Re(b_0) x +\Im(b_0) y  &  \alpha_0 x +\gamma_0 y +
 (\alpha_1 x +\gamma_1 y) e^{-i \phi} \cr
 (\overline{\alpha_0} x +\overline{\gamma_0} y) + (\overline{\alpha_1} x +\overline{\gamma_1} y) e^{i \phi} & t + \Re(b_1) x +\Im(b_1) y 
  \end{pmatrix},
  \end{equation*}
  where $\alpha_0$, $\alpha_1$, $\gamma_0$ and $\gamma_1$ are as defined above.
The determinant of this matrix can be simplified to
   \begin{align*}
   F_{T_{\phi}}(t, x, y)= {} &(t +\Re(b_0) x +\Im(b_0) y)(t +\Re(b_1) x +\Im(b_1) y) -|\alpha_0 x +\gamma_0 y|^2 - | \alpha_1 x +\gamma_1 y|^2\\
&\quad -  2 \Re\left( (\alpha_0 x +\gamma_0 y)( \overline{\alpha_1} x +\overline{\gamma_1} y) e^{i \phi}\right) \\
= {} &(t +\Re(b_0) x +\Im(b_0) y)(t +\Re(b_1) x +\Im(b_1) y) -|\alpha_0 x +\gamma_0 y|^2 - | \alpha_1 x +\gamma_1 y|^2\\
&\quad -  2 \Re\left( (\alpha_0 x +\gamma_0 y)( \overline{\alpha_1} x +\overline{\gamma_1} y)\right) \cos \phi +  2 \Im \left( (\alpha_0 x +\gamma_0 y)(\overline{\alpha_1} x +\overline{\gamma_1} y)\right)\sin \phi \\
={} & G_1(t,x,y)-|\alpha_1 x + \gamma_1 t|^2 H_1(t,x,y) \\
&\quad -  2 \Re\left( (\alpha_0 x +\gamma_0 y)( \overline{\alpha_1} x +\overline{\gamma_1} y)\right) \cos \phi +  2 \Im \left( (\alpha_0 x +\gamma_0 y)(\overline{\alpha_1} x +\overline{\gamma_1} y)\right)\sin \phi,
\end{align*}
as desired.

Now, for the case $n+1\geq 3$, by computing the real and imaginary parts of the matrix $T_\phi$ in \eqref{eq:symbol_n}, we can observe that $t I_{n+1} + x \Re(T_\phi) + y \Im(T_\phi)$ is the matrix
     \[
\begin{pmatrix}
    \lambda_{1,1} & \lambda_{1,2} & 0  & 0 &  \ldots &  0 & \lambda_{1,n+1} \\
    \lambda_{2,1} & \lambda_{2,2} & \lambda_{2,3} & 0  & \cdots  &0 &  0  \\
    0 & \lambda_{3,2} & \lambda_{3,3} & \lambda_{3,3}  & \cdots  & 0 &  0  \\
    0 & 0 & \lambda_{4,3} & \lambda_{4,4}  & \cdots  &  0  & 0\\
       \vdots & \vdots & \vdots & \vdots & &\vdots &\vdots  \\
       0  &  0  &   0 &  0 & \cdots&    \lambda_{n, n} & \lambda_{n, n+1} \\
       \lambda_{n+1,1}  &  0  &   0 &  0 & \cdots &    \lambda_{n+1, n} & \lambda_{n+1, n+1} 
\end{pmatrix},
\]
where we have now set
\[
\lambda_{1,n+1}=(\alpha_n x + \gamma_n y) e^{-i \phi} \quad \text{and} \quad
\lambda_{n+1,1}=(\ovl{\alpha_n} x + \ovl{\gamma_n} y) e^{i \phi}.
\]
The above matrix is tridiagonal, except for the upper-right and bottom-left corners. 

 We can compute the determinant of the matrix polynomial $t I_{n+1} + x \Re(T_{\phi}) +y \Im(T_{\phi})$ by using Lemma~\ref{le:laplace} obtaining  
  \begin{align*}
  F_{T_\phi}(t,x,y) = {}& \det( t I_{n+1} + x \Re(T_{\phi}) +y \Im(T_{\phi})) \\
={} &  G_n(t,x,y)- |\alpha_n x +\gamma_n y|^2\ H_n(t,x,y) \\ 
  &  +(-1)^n (\overline{\alpha_n} x +\overline{\gamma_n} y) \prod_{j=0}^{n-1} (\alpha_j x +\gamma_j y) e^{i \phi}
  +(-1)^n   (\alpha_n x +\gamma_n y) \prod_{j=0}^{n-1} (\overline{\alpha_j} x +\overline{\gamma_j} y) e^{-i \phi}\\
  ={} &  G_n(t,x,y)- |\alpha_n x +\gamma_n y|^2\ H_n(t,x,y) + 2 (-1)^n \Re\left( (\overline{\alpha_n} x +\overline{\gamma_n} y) \prod_{j=0}^{n-1} (\alpha_j x +\gamma_j y) e^{i \phi} \right).
  \end{align*}
Computing the real part of the last term above, we obtain the equation 
 \begin{align*}
F_{T_\phi}(t,x,y)={}& G_{n}(t, x, y) - |\alpha_n x +\gamma_n y|^2 H_{n}(t, x, y) 
  + 2(-1)^n \  \Re\left( \left(\overline{\alpha_n} x +\overline{\gamma_n} y\right) {\prod_{j=0}^{n-1}} (\alpha_j x +\gamma_j y)\right) \cos \phi \\ 
  & -2 (-1)^n \  
    \Im\left( \left(\overline{\alpha_n} x +\overline{\gamma_n} y\right) { \prod_{j=0}^{n-1}}  (\alpha_j x +\gamma_j y)\right) \sin \phi,     \end{align*}
which completes the proof.
\end{proof}

For every $n \in \N$ and for a fixed point $(x, y) \in {\mathbb R}^2$, the angle $\phi \in [0, 2\pi)$ is involved only in the constant term (with respect to the variable $t$) of the polynomial 
 $F_{T_\phi}(t,x,y)$. Furthermore, for every $(x, y) \in {\mathbb R}^2$ and for every $\phi \in [0, 2 \pi)$, the polynomial $F_{T_\phi}(t,x,y)$, seen as a polynomial in $t$, has $n+1$ real roots, counting multiplicities, as it is the characteristic polynomial of the Hermitian matrix $-x \Re(T_\phi)-y \Im(T_\phi)$. The following lemma will be useful later when applied to the polynomial $F_{T_\phi}$.

\begin{lemma}\label{le:maxmax}
Let $F(t:\phi)$ be a family of polynomials in $\R[t]$ given by the expression 
\[
F(t: \phi) =t^{n+1} +p_n t^n + \ldots +p_1 t +p_0 -u \cos \phi -v \sin \phi,
\]
where $\phi \in [0,2\pi)$. Assume that the polynomial $F(t: \phi)$ has $n+1$ real roots counting multiplicities for any angle $\phi \in [0, 2\pi)$. Let $\phi_0$, $\phi_1 \in [0, 2\pi)$ be such that
\[  u \cos \phi_0 +v \sin \phi_0 = - \sqrt{u^2 +v^2} \quad \text{ and } \quad 
u \cos \phi_1 +v\sin \phi_1 =  \sqrt{u^2 +v^2}.
\]
Then
\[
\max \left\{  \max \left\{t \in \R \, : \, F(t: \phi)=0 \right\} : 0 \leq \phi < 2 \pi \right\}
= \max \left\{t \in \R \, : \, F(t :  \phi_1)=0 \right\}, 
\]
and
\[
    \min\left\{ \max \left\{t \in \R \, : \, F(t: \phi)=0 \right\} : 0 \leq \phi < 2 \pi \right\} = \max \left\{t \in \R \, : \, F(t: \phi_0)=0 \right\}.
    \]
\end{lemma}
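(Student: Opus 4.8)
The plan is to reduce the two identities to a single monotonicity statement. Write $g(t)=t^{n+1}+p_nt^n+\dots+p_1t+p_0$ and $c(\phi)=u\cos\phi+v\sin\phi$, so that $F(t:\phi)=g(t)-c(\phi)$ and a real number $t$ is a root of $F(\cdot:\phi)$ exactly when $g(t)=c(\phi)$. Put $R=\sqrt{u^2+v^2}$. By the Cauchy--Schwarz inequality $c(\phi)\in[-R,R]$ for all $\phi$, the defining conditions on $\phi_0$ and $\phi_1$ say precisely $c(\phi_0)=-R$ and $c(\phi_1)=R$, and since $\phi\mapsto c(\phi)$ is continuous, the intermediate value theorem shows it maps $[0,2\pi)$ onto all of $[-R,R]$. (If $u=v=0$ the polynomial does not depend on $\phi$ and both identities are trivial, so I may assume $R>0$.)

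Next I would introduce $r(c)=\max\{t\in\R:\ g(t)=c\}$ for $c\in[-R,R]$ and check it is well defined. The set is nonempty because the hypothesis guarantees that $F(\cdot:\phi)$ has a real root for the $\phi$ with $c(\phi)=c$; it is closed, being a level set of the continuous function $g$; and it is bounded above because $g$ is monic of degree $n+1\ge 2$, so $g(t)\to+\infty$ and in particular $g(t)>R\ge c$ for all sufficiently large $t$. With this notation, $\max\{t\in\R:\ F(t:\phi)=0\}=r(c(\phi))$ for every $\phi$.

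The key step is to show that $r$ is nondecreasing (in fact strictly increasing) on $[-R,R]$. Given $-R\le c_1<c_2\le R$, we have $g(r(c_1))=c_1<c_2$ while $g(t)\to+\infty$ as $t\to+\infty$, so by the intermediate value theorem there is some $t>r(c_1)$ with $g(t)=c_2$; hence $r(c_2)\ge t>r(c_1)$. Granting this, and using that $c(\cdot)$ maps $[0,2\pi)$ onto $[-R,R]$ with $c(\phi_1)=R$ and $c(\phi_0)=-R$, I conclude
\[
\max_{0\le\phi<2\pi}\ \max\{t\in\R:\ F(t:\phi)=0\}=\max_{c\in[-R,R]} r(c)=r(R)=\max\{t\in\R:\ F(t:\phi_1)=0\},
\]
and symmetrically the inner minimum over $\phi$ equals $r(-R)=\max\{t\in\R:\ F(t:\phi_0)=0\}$, which are the two asserted formulas.

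I do not expect a genuine obstacle: the argument is elementary once the reformulation $F(t:\phi)=g(t)-c(\phi)$ is in place. The only point that needs a little care is the well-definedness of $r(c)$ for \emph{every} $c\in[-R,R]$, and this is exactly where the standing hypothesis that $F(\cdot:\phi)$ has $n+1$ real roots for all $\phi$ is used: it is needed only to ensure at least one real root for each $c\in[-R,R]$ (without it $g(t)=c$ could have no real solution, e.g.\ when $n+1$ is even), while the refinement "counting multiplicities" plays no further role.
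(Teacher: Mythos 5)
Your proof is correct, and it follows the same overall reduction as the paper: rewrite $F(t:\phi)=g(t)-c(\phi)$ with $c(\phi)\in[-R,R]$, and show that the largest real root of $g(t)=c$ is an increasing function of $c$, so the extremes occur at $c=\pm R$, i.e.\ at $\phi_1$ and $\phi_0$. The one genuine difference is how the monotonicity is established. The paper picks a $\phi$ with $c(\phi)=0$ to conclude that $g$ itself is real-rooted, hence so is $g'$, lets $r_0$ be the largest root of $g'$, and argues that $g$ is increasing on $[r_0,\infty)$ and that each equation $g(t)=c(\phi)$ has a unique solution there; this implicitly relies on the interlacing of the roots of $g-c$ and $g'$ to know that the largest root of $g-c$ actually lies in $[r_0,\infty)$. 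You instead prove strict monotonicity of $r(c)=\max\{t:g(t)=c\}$ directly from the intermediate value theorem and the fact that $g(t)\to+\infty$: since $g(r(c_1))=c_1<c_2$, some $t>r(c_1)$ satisfies $g(t)=c_2$. Your route is more elementary and self-contained (no appeal to the real-rootedness of the derivative), at the cost of having to verify explicitly that $r(c)$ is well defined, which is exactly where you correctly isolate the only use of the real-rootedness hypothesis. Both arguments are valid; yours sidesteps the interlacing point that the paper leaves tacit.
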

\begin{proof}
Define $p(t)$ as
\[
p(t)=t^{n+1} +p_n t^n + \ldots +p_1 t +p_0.
\]
Observe that, by assumption, the equation
\[
p(t)=u \cos \phi + v \sin \phi
\]
has $n+1$ real solutions (counting multiplicities) for every $\phi \in [0, 2 \pi)$. For some $\phi \in [0, 2 \pi)$, we have $u \cos \phi + v \sin \phi =0$, and hence $p$ has $n+1$ real roots (counting multiplicities) and the derivative of $p$ has $n$ real roots (counting multiplicities). Let $r_0$ be the largest root of $p'(t)$. Hence, $p$ is increasing on the interval $[r_0, \infty)$ and the equations \[
p(t)=u \cos \phi + v \sin \phi
\]
have a unique solution on the interval $[r_0, \infty)$.

Observe that for every $\phi\in [0, 2\pi)$
\[
- \sqrt{u^2 +v^2} \leq u \cos \phi +v \sin \phi \leq \sqrt{u^2 +v^2};
\]
equality occurs on the left-hand-side inequality at $\phi_0$ while equality occurs on the right-hand-side inequality at $\phi_1$.

For each $\phi \in[0, 2\pi)$, consider the number 
\[
\max\{ t \in \R \, : \, p(t)=u \cos \phi + v \sin \phi\}.
\]
Since the function $p$ is increasing on $[r_0, \infty)$, the largest of these numbers, when $\phi$ varies, occurs when $t$ is the largest solution of the equation
\[
p(t)= \sqrt{u^2 +v^2}.
\]
Hence we have 
\[
\max \left\{  \max \left\{t\in \R \, : \, F(t, \phi)=0 \right\} : 0 \leq \phi < 2 \pi \right\}
= \max \left\{t \in \R \, : \, F(t, \phi_1)=0 \right\}.
\]

Analogously, the smallest, when $\phi$ varies in $[0, 2\pi)$, among the largest solutions $t$ of the equations
\[
p(t) =u \cos \phi +v \sin \phi,
 \]
occurs when $t$ is the largest solution of the equation
\[
p(t)= -\sqrt{u^2 +v^2}.
\]
Hence we have 
\[
\min \left\{  \max \left\{t \in \R \, : \, F(t, \phi)=0 \right\} : 0 \leq \phi < 2 \pi \right\}
= \max \left\{t \in \R \, : \, F(t, \phi_0)=0 \right\}. \qedhere
\]
\end{proof}

In Theorem~\ref{2(n+1)}, we will show that the closure of the numerical range of $T$ is the numerical range of a single matrix. One of the key steps in the proof of said theorem will be to use the following proposition, which computes the closure of the numerical range of $T$ by using a single homogeneous polynomial, instead of the uncountable number of Kippenhahn polynomials of the symbols $T_\phi$,  which Theorem 2.8 in \cite{IM} would suggest: this is achieved by getting rid of the parameter $\phi$ in the expression of the Kippenhahn polynomial of the symbol $T_\phi$ in Proposition \ref{prop:kipp}.

\begin{proposition}\label{prop:defP}
Let $n \in \N$. Suppose that $T(a, b, c)$ is an $n+1$-periodic tridiagonal operator acting on $\ell^2({\mathbb N}_0)$. Let $G_n$ and $H_n$ be as in Proposition \ref{prop:kipp} and let $P$ be the real homogeneous polynomial of degree $2(n+1)$ given by
\begin{equation*}
   P(t, x, y)=\left( G_{n}(t, x, y)-|\alpha_n x +\gamma_n y|^2 H_{n}(t, x, y)\right)^2 
   -4 \prod_{j=0}^{n} \left|\alpha_j x +\gamma_j y\right|^2.
\end{equation*}
   Then $P(t, 0, 0)=t^{2(n+1)}$ and
  \[
  \sup \left\{\Re(e^{-i \theta} z)\colon\right. \bigl. z \in W\left(T(a, b,c)\right)\bigr\} 
   =\max \{ t \in {\mathbb R}\colon P(t, -\cos \theta, -\sin \theta) =0 \},
  \]
  for each $\theta \in [0, 2\pi)$.
\end{proposition}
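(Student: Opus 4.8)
The plan is to combine the explicit formula for $F_{T_\phi}$ from Proposition~\ref{prop:kipp} with the ``optimization over $\phi$'' from Lemma~\ref{le:maxmax}, and then recognize the result as the polynomial $P$.

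First I would establish $P(t,0,0)=t^{2(n+1)}$. Setting $x=y=0$ in Proposition~\ref{prop:kipp}, the factors $\alpha_j x+\gamma_j y$ all vanish and $\lambda_{j,j}=t$, so $G_n(t,0,0)=t^{n+1}$, $H_n(t,0,0)=t^{n-1}$ (and $H_1=1$); moreover $|\alpha_n\cdot 0+\gamma_n\cdot 0|^2=0$, so $P(t,0,0)=(t^{n+1})^2-0=t^{2(n+1)}$.

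Next, fix $\theta\in[0,2\pi)$ and consider $F_{T_\phi}(t,-\cos\theta,-\sin\theta)$ as a polynomial in $t$ parametrized by $\phi$. From Proposition~\ref{prop:kipp}, with $(x,y)=(-\cos\theta,-\sin\theta)$, this has the exact form treated in Lemma~\ref{le:maxmax}: it is monic of degree $n+1$ in $t$ (the leading term comes from $G_n$), its coefficients $p_n,\dots,p_0$ in $t$ are independent of $\phi$, and the only $\phi$-dependence is the additive constant $-u\cos\phi-v\sin\phi$ where
\[
u=2(-1)^n\Re\!\left(\left(\overline{\alpha_n}x+\overline{\gamma_n}y\right){\prod_{j=0}^{n-1}}(\alpha_j x+\gamma_j y)\right),\quad
v=-2(-1)^n\Im\!\left(\left(\overline{\alpha_n}x+\overline{\gamma_n}y\right){\prod_{j=0}^{n-1}}(\alpha_j x+\gamma_j y)\right),
\]
evaluated at $(x,y)=(-\cos\theta,-\sin\theta)$. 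The hypothesis of Lemma~\ref{le:maxmax} holds because, as noted in the text before that lemma, for every $\phi$ the polynomial $F_{T_\phi}(t,x,y)$ is (up to sign conventions) the characteristic polynomial of the Hermitian matrix $-x\Re(T_\phi)-y\Im(T_\phi)$, hence splits over $\R$. Now I invoke the displayed identity in the Introduction, namely $\max\{t\in\R: F_{T_\phi}(t,-\cos\theta,-\sin\theta)=0\}=\max\{\Re(e^{-i\theta}z):z\in W(T_\phi)\}$, together with the description of $\overline{W(T(a,b,c))}$ as the closed convex hull of $\bigcup_\phi W(T_\phi)$ (Theorem~2.8 of \cite{IM}, cited in the paragraph preceding this proposition), to get
\[
\sup\{\Re(e^{-i\theta}z):z\in W(T(a,b,c))\}
=\max\{\max\{t\in\R:F_{T_\phi}(t,-\cos\theta,-\sin\theta)=0\}:0\le\phi<2\pi\}.
\]
By the first conclusion of Lemma~\ref{le:maxmax}, the right-hand side equals $\max\{t\in\R:F_{T_{\phi_1}}(t,-\cos\theta,-\sin\theta)=0\}$, where $\phi_1$ is chosen so that $u\cos\phi_1+v\sin\phi_1=\sqrt{u^2+v^2}$, i.e. $F_{T_{\phi_1}}(t,x,y)=G_n-|\alpha_n x+\gamma_n y|^2 H_n-\sqrt{u^2+v^2}$.

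Finally I would identify $\sqrt{u^2+v^2}$ with $2\prod_{j=0}^{n}|\alpha_j x+\gamma_j y|$ and show the roots match those of $P$. Since $u^2+v^2=4\,\bigl|\bigl(\overline{\alpha_n}x+\overline{\gamma_n}y\bigr)\prod_{j=0}^{n-1}(\alpha_j x+\gamma_j y)\bigr|^2=4\prod_{j=0}^{n}|\alpha_j x+\gamma_j y|^2$, we get $\sqrt{u^2+v^2}=2\prod_{j=0}^{n}|\alpha_j x+\gamma_j y|$. Writing $Q(t,x,y)=G_n(t,x,y)-|\alpha_n x+\gamma_n y|^2 H_n(t,x,y)$, we have
\[
P(t,x,y)=Q(t,x,y)^2-\tfrac14(u^2+v^2)\cdot 4\cdot\tfrac14\cdots
\]
— more cleanly, $P=\bigl(Q-2\textstyle\prod_{j=0}^n|\alpha_j x+\gamma_j y|\bigr)\bigl(Q+2\prod_{j=0}^n|\alpha_j x+\gamma_j y|\bigr)$. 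Thus at $(x,y)=(-\cos\theta,-\sin\theta)$ the roots of $P(\cdot,x,y)$ are exactly the union of the roots of $F_{T_{\phi_1}}(\cdot,x,y)=Q-2\prod|\cdots|$ and of the ``$\phi_0$'' polynomial $Q+2\prod|\cdots|$. It remains to argue that the largest root overall comes from the $F_{T_{\phi_1}}$ factor; this is exactly the content of Lemma~\ref{le:maxmax}, since among all $\phi$ the largest root of $F_{T_\phi}$ is largest for $\phi=\phi_1$, and the $\phi_0$ factor corresponds to the smallest such largest root. Hence $\max\{t:P(t,-\cos\theta,-\sin\theta)=0\}=\max\{t:F_{T_{\phi_1}}(t,-\cos\theta,-\sin\theta)=0\}$, which completes the chain of equalities.

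The main obstacle I anticipate is bookkeeping rather than conceptual: one must be careful that the conventions relating $F_{T_\phi}$, $W(T_\phi)$, and the half-space functional $\theta\mapsto\sup\Re(e^{-i\theta}z)$ are applied consistently (in particular the sign of the arguments $-\cos\theta,-\sin\theta$ versus $\cos\theta,\sin\theta$, cf. the displayed formulas in the Introduction), and that Theorem~2.8 of \cite{IM} is quoted in the precise form that gives $\overline{W(T)}$ as the closed convex hull of $\bigcup_\phi W(T_\phi)$, so that taking a supporting functional commutes with the union. Granting those, everything else is the substitution $u^2+v^2=4\prod_{j=0}^n|\alpha_j x+\gamma_j y|^2$ together with Lemma~\ref{le:maxmax}.
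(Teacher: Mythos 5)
Your proposal is correct and follows essentially the same route as the paper's proof: express $F_{T_\phi}$ in the form of Lemma~\ref{le:maxmax} via Proposition~\ref{prop:kipp}, compute $u^2+v^2=4\prod_{j=0}^{n}|\alpha_j x+\gamma_j y|^2$, factor $P=F(t:\phi_0)\cdot F(t:\phi_1)$, and chain the support-function identities with Theorem~2.8 of \cite{IM}. The only discrepancy is that your $u$ and $v$ carry the opposite signs from the paper's (equivalent to replacing $\phi$ by $\phi+\pi$), which is harmless since only $u^2+v^2$ and the extremal values $\pm\sqrt{u^2+v^2}$ enter the argument.
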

\begin{proof}
It is trivial to check that $P(t,0,0)=t^{2(n+1)}$. Now, let $F(t:\phi)=F_{T_\phi}(t,x,y)$, where we know by Proposition \ref{prop:kipp} that \[
F_{T_\phi}(t,x,y)=G_n(t,x,y)-|\alpha_n x + \gamma_n y|^2 H_n(t,x,y) - u \cos \phi - v \sin \phi,
\]
where
\[
u= - 2(-1)^n \  \Re\left( \left(\overline{\alpha_n} x +\overline{\gamma_n} y\right) {\prod_{j=0}^{n-1}} (\alpha_j x +\gamma_j y)\right) 
\]
and 
\[
v= 2 (-1)^n \  
    \Im\left( \left(\overline{\alpha_n} x +\overline{\gamma_n} y\right) { \prod_{j=0}^{n-1}}  (\alpha_j x +\gamma_j y)\right).
    \]

Notice that
\begin{align*}
u^2+v^2= {} & 4 \Re^2\left( \left(\overline{\alpha_n} x +\overline{\gamma_n} y\right) {\prod_{j=0}^{n-1}} (\alpha_j x +\gamma_j y)\right) + 4 \Im^2\left( \left(\overline{\alpha_n} x +\overline{\gamma_n} y\right) { \prod_{j=0}^{n-1}}  (\alpha_j x +\gamma_j y)\right) \\
= {} & 4 \left| \left(\overline{\alpha_n} x +\overline{\gamma_n} y\right) {\prod_{j=0}^{n-1}} (\alpha_j x +\gamma_j y)\right|^2 \\
= & {} 4 \prod_{j=0}^{n} \left|\alpha_j x +\gamma_j y\right|^2.
\end{align*}

The polynomial $F(t: \phi)$ has the form outlined in Lemma~\ref{le:maxmax} and, as was mentioned before Lemma~\ref{le:maxmax}, it has $n+1$ real roots, counting multiplicities. Hence, by Lemma~\ref{le:maxmax}, for $\phi_0$ and $\phi_1$ satisfying
\[
u \cos(\phi_0)+v\sin(\phi_0)=-\sqrt{u^2+v^2}, \qquad u \cos(\phi_1)+v\sin(\phi_1)=\sqrt{u^2+v^2},
\]
we have that
\[
\max \left\{  \max \left\{t: F(t : \phi)=0 \right\} : 0 \leq \phi < 2 \pi \right\}
= \max \left\{t: F(t:\phi_1)=0 \right\}, 
\]
and
\[
    \min\left\{ \max \left\{t: F(t: \phi)=0 \right\} : 0 \leq \phi < 2 \pi \right\} = \max \left\{t: F(t: \phi_0)=0 \right\}.
    \]

Notice that
\begin{align*}
    F(t : \phi_0) \cdot F(t : \phi_1)
    = {} & \left(G_n(t,x,y)-|\alpha_n x + \gamma_n y|^2 H_n(t,x,y) - \left(u \cos(\phi_0) +v \sin(\phi_0)\right)\right) \\
    & \cdot \left(G_n(t,x,y)-|\alpha_n x + \gamma_n y|^2 H_n(t,x,y) - \left(u \cos(\phi_1) +v \sin(\phi_1)\right)\right)\\
    = {} & \left(G_n(t,x,y)-|\alpha_n x + \gamma_n y|^2 H_n(t,x,y) \right)^2 - \left(\sqrt{u^2+v^2}\right)^2 \\
    = {} & \left(G_n(t,x,y)-|\alpha_n x + \gamma_n y|^2 H_n(t,x,y) \right)^2 - (u^2 + v^2)\\
    ={} & P(t,x,y).
    \end{align*}

We also have, for each $\theta \in [0, 2\pi)$, that    
\begin{equation}\label{eq:TphiP}
    \begin{split}
\max\{ t \in {\mathbb R}\colon F_{T_\phi}(t,-\cos \theta,& -\sin \theta)=0,\ 0 \leq \phi < 2\pi \} \\
= {}& \max\left\{\max\left\{ t \in {\mathbb R}\colon F_{T_\phi}(t,-\cos \theta,-\sin \theta)=0 \right\} \colon \ 0 \leq \phi < 2\pi \right\} \\
= {}& \max\left\{ t \in {\mathbb R}\colon F_{T_{\phi_1}}(t,-\cos \theta,-\sin \theta)=0 \right\} \\
   = {} & \max \{ t \in {\mathbb R}\colon P(t, -\cos \theta, -\sin \theta) =0 \}.
   \end{split}
   \end{equation}
   The last equality follows since the roots of $P(t,-\cos \theta,-\sin \theta)$ are those of $F(t:\phi_1)=F_{T_{\phi_1}}(t,-\cos \theta,-\sin \theta)$ and $F(t:\phi_0)=F_{T_{\phi_0}}(t,-\cos\theta,-\sin \theta)$, so by the choice of $\phi_0$ and $\phi_1$, the largest root of $P(t,-\cos \theta,-\sin \theta)$ is the largest root of $F_{T_{\phi_1}}(t,-\cos \theta,-\sin \theta)$. 

By the definition of the Kippenhahn polynomial, we have
\[
\max \left\{ t \in {\mathbb R}\colon F_{T_\phi}(t,-\cos(\theta),-\sin(\theta))=0 \right\}
 =
\max \left\{ \Re(e^{-i \theta} z): z \in W(T_{\phi})\right\}.
\]
and hence we obtain
\begin{equation}\label{eq:Kip_Tphi}
\begin{split}
\max \left\{ t \in \R \, : \,  F_{T_\phi}(t,-\cos(\theta),-\sin(\theta))=0,\ 0 \leq \phi < 2\pi \right\}
 \\ = 
\max \left\{ \Re(e^{-i \theta} z)  \, : \,  z \in W(T_{\phi}), \  0 \leq \phi < 2\pi\right\}.
\end{split}
\end{equation}

Lastly, the equality 
\begin{equation}\label{eq:theoremIM}
  \sup \left\{\Re(e^{-i \theta} z) \, : \,  z \in W\left(T(a, b,c)\right)\right\} 
  =\max \left\{ \Re(e^{-i \theta} z) \, : \, z \in W\left(T_{\phi}\right), \ 0 \leq \phi < 2\pi\right\}
  \end{equation}
follows from Theorem 2.8 in \cite{IM}. Putting together equations \eqref{eq:TphiP}, \eqref{eq:Kip_Tphi} and \eqref{eq:theoremIM}, we obtain the desired conclusion.
\end{proof}

The following definition will be useful.

  \begin{definition} 
  Suppose that $Q(t, x, y)$ is a real homogeneous polynomial in $3$ variables $t, x, y$ of degree $m$ with 
  $Q(1, 0, 0) > 0$. If the equation 
   $Q(t, x_0, y_0) =0$ in $t$ has $m$ real solutions counting multiplicities for any $(x_0, y_0) \in {\mathbb R}^2$ with $x_0^2 +y_0^2 > 0$, we say that $Q$ is {\em hyperbolic} (with respect to $(1,0,0)$).
\end{definition}

 The above condition may  also be formulated as: ``the equation $Q(t, -\cos \theta, -\sin \theta) =0$ in $t$ has $m$ real solutions for any 
 angle $0 \leq \theta < 2\pi$''. 

  \begin{theorem}[Plaumann and Vinzant \cite{PV}] \label{PV}
  Suppose that $Q(t, x, y)$ is a real homogeneous hyperbolic polynomial of degree $m$
   with $Q(1,0,0)=1$. Then there exists an $m \times m$ complex matrix $A$ satisfying 
          $$Q(t, x, y) ={\rm det}(t I_m +x {\rm Re}(A) +y {\rm Im}(A)).$$
\end{theorem}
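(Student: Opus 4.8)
The plan is to follow the elementary construction of Plaumann and Vinzant, which realizes a hyperbolic ternary form of degree $m$ as a Hermitian determinant by working through the pencil of real lines in $\mathbb{P}^2$ through the point of hyperbolicity $(1:0:0)$. Observe first that the assertion is equivalent to producing Hermitian matrices $H_1,H_2$ with $Q(t,x,y)=\det(tI_m+xH_1+yH_2)$, since then $A=H_1+iH_2$ has $\Re(A)=H_1$ and $\Im(A)=H_2$. The first reduction I would make is to the case in which the projective curve $V(Q)\subset\mathbb{P}^2_{\C}$ is smooth: if $Q$ is reducible, each factor is again hyperbolic with respect to $(1,0,0)$, so one inducts on the degree and takes a block-diagonal direct sum of the representations of the factors; if $Q$ is irreducible but singular, one perturbs $Q$ inside the (closed, by Gårding) cone of hyperbolic forms of degree $m$ to a smooth $Q_\varepsilon$ with $Q_\varepsilon(1,0,0)=1$, obtains representations $A_\varepsilon$, and passes to a convergent subsequence $A_{\varepsilon_k}\to A$. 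The limit is legitimate because the eigenvalues of $\Re(A_\varepsilon)$ and of $\Im(A_\varepsilon)$ are among the roots of $Q_\varepsilon(t,1,0)$ and $Q_\varepsilon(t,0,1)$ respectively, hence bounded uniformly in $\varepsilon$ in terms of the (bounded) coefficients of $Q_\varepsilon$; so the matrices $A_\varepsilon=\Re(A_\varepsilon)+i\,\Im(A_\varepsilon)$ stay in a compact set and the determinantal identity passes to the limit.

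For smooth $Q$, the heart of the construction is an \emph{interlacer}: a degree $(m-1)$ form $p(t,x,y)$, also hyperbolic with respect to $(1,0,0)$, whose roots along each real line through $(1:0:0)$ interlace those of $Q$ — for instance $\partial Q/\partial t$, the directional derivative in the direction $(1,0,0)$, which interlaces $Q$ by Rolle's theorem applied line-by-line. Restricting $Q$ and $p$ to a line $\{(t,-x_0 s,-y_0 s)\}$ with $x_0^2+y_0^2>0$ produces a pair of real-rooted univariate polynomials in $t$ of degrees $m$ and $m-1$ with interlacing roots, and the classical theory of Jacobi matrices (equivalently, the continued fraction expansion of $p/Q$) attaches to such a pair a unique real symmetric tridiagonal matrix with positive off-diagonal entries realizing $Q$ and $p$ as the characteristic polynomials of the matrix and of its principal submatrix. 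I would then organize this line-by-line data — one natural device is the Bézoutian of the pair $(Q,p)$, a symmetric matrix whose signature along a real line detects the interlacing — into a single linear matrix pencil $tI_m+xH_1+yH_2$ whose determinant is $Q$, and use that the interlacing hypothesis forces the relevant quadratic forms to be sign-definite along every real line through $(1:0:0)$, which is exactly what permits one to choose $H_1,H_2$ self-adjoint. A final positive rescaling matches the coefficient of $t^m$ with $\det(tI_m)=t^m$, i.e. $Q(1,0,0)=1$.

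The step I expect to be the genuine obstacle is the \emph{global} compatibility: the Jacobi-type data produced on individual lines must glue to a \emph{single} pair of Hermitian matrices $(H_1,H_2)$, rather than to matrices that secretly depend on the chosen line. In the classical Dixon–Vinnikov framework this is precisely the selection of a theta characteristic on $V(Q)$ with no global sections, rendered canonical by the real structure together with hyperbolicity; in the elementary version it becomes a uniform rank-and-definiteness statement over the real pencil of lines, and this is the piece that requires real work. Everything else — univariate interlacing and the Jacobi normal form, the degeneration argument for singular or reducible $Q$, and the normalization — is routine once that compatibility is in hand. Of course, in the present paper one need not reprove any of this: one simply invokes this result of Plaumann and Vinzant as stated and applies it to the polynomial $P$ of Proposition~\ref{prop:defP}, whose hyperbolicity has already been arranged by construction (along each line it is a product of characteristic polynomials of Hermitian matrices), which is the route I would ultimately take here.
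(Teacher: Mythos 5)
The paper does not prove this statement at all: it is imported verbatim from Plaumann and Vinzant \cite{PV} and used as a black box, so your closing remark --- that here one simply invokes the result and applies it to the polynomial $P$ of Proposition~\ref{prop:defP} --- is exactly what the paper does, and is the only part of your proposal that the paper itself contains. Your reduction of the statement to a Hermitian pencil $tI_m+xH_1+yH_2$ via $A=H_1+iH_2$ is correct and is the right way to read the theorem.

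As a standalone proof sketch of the Plaumann--Vinzant theorem, what you write tracks their actual strategy (interlacer $\partial Q/\partial t$ by line-by-line Rolle; reduction to the smooth case by perturbing within the hyperbolicity cone, using Nuij-type density, plus a compactness argument on the representing matrices whose validity you correctly justify by eigenvalue bounds; Bézoutian/Hermite-matrix positivity as the mechanism that produces a Hermitian rather than arbitrary pencil). But the decisive step is the one you explicitly defer: showing that interlacing forces the Bézoutian of $(Q,\partial Q/\partial t)$ to be positive semidefinite globally as a matrix of forms, and extracting the Hermitian pencil from a rank-$m$ factorization of it --- this is the content of \cite{PV} and is not something the line-by-line Jacobi normal form gives you for free, since (as you note) the tridiagonal data on each line has no reason to come from a single pair $(H_1,H_2)$. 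Two smaller points: in the reducible case the factors are hyperbolic but need not each satisfy the normalization $Q(1,0,0)=1$ (only up to positive scalars, which must be redistributed before taking the block direct sum); and the claim that the relevant sign-definiteness ``is exactly what permits one to choose $H_1,H_2$ self-adjoint'' is the theorem, not an observation. None of this affects the paper, which correctly treats Theorem~\ref{PV} as a citation; but your sketch should not be mistaken for a proof of it.
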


\smallskip
\noindent{\bf Remark.} Helton and Vinnikov \cite{HV} (cf. \cite{HS}) proved a result stronger than the above theorem which guarantees that we can construct 
an $m \times m$ complex {\em symmetric} matrix $A$ satisfying a similar property. In this paper we do not use the symmetry of the matrix $A$. 

\vskip5pt  Depending on the above Theorem~\ref{PV}, we obtain the main theorem of this paper.

   \begin{theorem}\label{2(n+1)} 
   Suppose that $T(a, b, c)$ is an $n+1$-periodic tridiagonal operator acting on $\ell^2({\mathbb N}_0)$. Then there exists a $2(n+1) \times 2(n+1)$ complex matrix $A$ such that
    \[
    \ovl{W(T(a, b,c))}=W(A)
    \]
where the matrix $A$ is chosen so that it satisfies
\begin{equation*}
   F_A(t, x, y)=\left( G_{n}(t, x, y)-|\alpha_n x +\gamma_n y|^2 H_{n}(t, x, y)\right)^2 
   -4 \prod_{j=0}^{n} \left|\alpha_j x +\gamma_j y\right|^2,
\end{equation*}
where the polynomials $G_n$ and $H_n$ are as in Proposition \ref{prop:kipp}.
\end{theorem}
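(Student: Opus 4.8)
The plan is to combine Proposition~\ref{prop:defP} with Theorem~\ref{PV} (Plaumann--Vinzant). First I would define the homogeneous polynomial $P(t,x,y)$ exactly as in Proposition~\ref{prop:defP}, so that we already know two facts about it: (i) $P(t,0,0)=t^{2(n+1)}$, hence $P(1,0,0)=1$; and (ii) for each $\theta\in[0,2\pi)$,
\[
\sup\{\Re(e^{-i\theta}z)\colon z\in W(T(a,b,c))\}=\max\{t\in\R\colon P(t,-\cos\theta,-\sin\theta)=0\}.
\]
To invoke Theorem~\ref{PV} I must check that $P$ is hyperbolic with respect to $(1,0,0)$, i.e.\ that for every $\theta$ the univariate polynomial $t\mapsto P(t,-\cos\theta,-\sin\theta)$ has $2(n+1)$ real roots counting multiplicity. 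This is the one genuine thing to verify, and it is where the bulk of the argument sits.

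For the hyperbolicity check I would argue as follows. Fix $(x,y)=(-\cos\theta,-\sin\theta)$. From the factorization established inside the proof of Proposition~\ref{prop:defP},
\[
P(t,x,y)=F(t:\phi_0)\cdot F(t:\phi_1),
\]
where $F(t:\phi)=F_{T_\phi}(t,x,y)$ is the Kippenhahn polynomial of the symbol $T_\phi$ and $\phi_0,\phi_1$ are the angles realizing $u\cos\phi+v\sin\phi=\mp\sqrt{u^2+v^2}$. As recalled just before Lemma~\ref{le:maxmax}, for each fixed $(x,y)$ and each $\phi$ the polynomial $F_{T_\phi}(\,\cdot\,,x,y)$ is (up to the substitution) the characteristic polynomial of the Hermitian matrix $-x\,\Re(T_\phi)-y\,\Im(T_\phi)$, so it has exactly $n+1$ real roots counting multiplicities. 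Therefore $F(t:\phi_0)$ and $F(t:\phi_1)$ each contribute $n+1$ real roots, and their product $P(t,x,y)$ has $2(n+1)$ real roots counting multiplicities. Since this holds for every $\theta$, and $P$ is a real homogeneous polynomial of degree $2(n+1)$ with $P(1,0,0)=1$, the polynomial $P$ is hyperbolic with respect to $(1,0,0)$ in the sense of the Definition preceding Theorem~\ref{PV}.

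Now I would apply Theorem~\ref{PV} to $P$: there exists a $2(n+1)\times2(n+1)$ complex matrix $A$ with
\[
F_A(t,x,y)=\det\bigl(tI_{2(n+1)}+x\,\Re(A)+y\,\Im(A)\bigr)=P(t,x,y),
\]
which is exactly the displayed formula in the statement. It remains to identify $W(A)$ with $\ovl{W(T(a,b,c))}$. For this I would use the description of the numerical range via support lines recalled in Section~1: a bounded convex set is determined by its support function $\theta\mapsto\max\{\Re(e^{-i\theta}z)\colon z\in\cdot\}$, and for $A$ this support function equals $\max\{t\in\R\colon F_A(t,-\cos\theta,-\sin\theta)=0\}=\max\{t\colon P(t,-\cos\theta,-\sin\theta)=0\}$. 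By Proposition~\ref{prop:defP} this is precisely $\sup\{\Re(e^{-i\theta}z)\colon z\in W(T(a,b,c))\}$, which is the support function of the closed convex set $\ovl{W(T(a,b,c))}$ (note $W(T(a,b,c))$ is convex by Toeplitz--Hausdorff). Two closed bounded convex subsets of $\C$ with the same support function are equal, so $\ovl{W(T(a,b,c))}=W(A)$ (here $W(A)$ is closed since $A$ is a finite matrix), completing the proof.

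The main obstacle is the hyperbolicity verification, and the subtlety there is purely bookkeeping: one must be sure that the factorization $P=F(t:\phi_0)F(t:\phi_1)$ from Proposition~\ref{prop:defP} is an identity of polynomials in $t$ for \emph{every} fixed real $(x,y)$ (which it is, since $u,v$ and the coefficients of $G_n,H_n$ are polynomials in $(x,y)$ and the argument there never divided by anything), and that each factor, being a genuine characteristic polynomial of a Hermitian matrix for that $(x,y)$, is real-rooted — the degenerate case $u^2+v^2=0$ being harmless since then $\phi_0=\phi_1$ can be taken equal and $P=F(t:\phi_0)^2$ still splits into real roots. Everything else is an assembly of results already in hand.
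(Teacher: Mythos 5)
Your proposal is correct and follows essentially the same route as the paper: define $P$ as in Proposition~\ref{prop:defP}, apply Theorem~\ref{PV} to obtain $A$ with $F_A=P$, and then match support functions of $W(A)$ and $\ovl{W(T(a,b,c))}$ via the largest-root characterization. The only difference is that you explicitly verify hyperbolicity of $P$ (via the factorization $P=F(t:\phi_0)F(t:\phi_1)$ into real-rooted characteristic polynomials of Hermitian matrices), a step the paper's proof leaves implicit; this is a welcome addition rather than a departure.
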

\begin{proof}
By Theorem \ref{PV}, there exists a $2(n+1) \times 2(n+1)$ matrix $A$ such that $P(t,x,y)=F_A(t,x,y)$, where $P$ is the homogeneous polynomial in Proposition \ref{prop:defP}. But also, by the same proposition, 
  \begin{align*}
  \sup \left\{\Re(e^{-i \theta} z)\colon  z \in W\left(T(a, b,c)\right)\right\} 
   = {}& \max \{ t \in {\mathbb R}\colon F_A(t, -\cos \theta, -\sin \theta) =0 \} \\
   = {}&  \max \left\{\Re(e^{-i \theta} z)\colon  z \in W(A)\right\} 
  \end{align*}
for each $\theta \in [0, 2\pi)$, and hence the closure of the numerical range of $T(a,b,c)$ equals the numerical range of $A$.
\end{proof}

It is clear that given the operator $T$, one can compute the polynomial $P$ which, by the Plaumann-Vinzant Theorem, is the Kippenhahn polynomial of some matrix $A$. The question arises on whether the matrix $A$ can be explicitly computed. The paper \cite{PV} shows a method for constructing such a matrix $A$ (see also \cite{HV, PSV}). 

In some cases, the matrix $A$ can be found explicitly, as the next proposition shows. The reader should compare our next result to Theorem 4.1 in \cite{BebianoEtAl}, where an alternative method for computing the numerical range of the tridiagonal operator $T(a,b,c)$ is obtained, when $a$, $b$ and $c$ are real $2$-periodic sequences.

\begin{proposition}\label{prop:2b2}
  Let $a$ and $c$ be real $2$-periodic sequences and let $b$ the constant $0$ sequence. If
  \[
  S=\begin{pmatrix}
  \alpha_0 + \alpha_1 & -\gamma_0 & - \gamma_1 & 0 \\
  -\gamma_0 & -\alpha_0 + \alpha_1 & 0 & - \gamma_1 \\
  - \gamma_1 & 0 & \alpha_0 - \alpha_1 & - \gamma_0 \\
  0 & -\gamma_1 & - \gamma_0  & -\alpha_0-\alpha_1
  \end{pmatrix}
  \]
  then $\ovl{W(T(a,b,c))}=W(S)$.
\end{proposition}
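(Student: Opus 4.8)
The plan is to invoke Theorem~\ref{2(n+1)} with $n=1$ and then to recognize the matrix it produces as having the same Kippenhahn polynomial as $S$; since two matrices with equal Kippenhahn polynomials have equal numerical ranges, this will give $W(S)=\ovl{W(T(a,b,c))}$. Explicitly, Theorem~\ref{2(n+1)} yields a $4\times 4$ matrix $A$ with $\ovl{W(T(a,b,c))}=W(A)$ and
\[
F_A(t,x,y)=\bigl(G_1(t,x,y)-|\alpha_1 x+\gamma_1 y|^2\,H_1(t,x,y)\bigr)^2-4\prod_{j=0}^{1}|\alpha_j x+\gamma_j y|^2=:P(t,x,y),
\]
so the whole task reduces to verifying $F_S=P$.

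First I would specialize $P$ to the present data. Since $b$ is the zero sequence, Proposition~\ref{prop:kipp} gives $H_1\equiv 1$ and $G_1(t,x,y)=t^2-|\alpha_0 x+\gamma_0 y|^2$; and since $a$ and $c$ are real, $\alpha_0,\alpha_1$ are real while $\gamma_0,\gamma_1$ are purely imaginary. Writing $p:=|\alpha_0 x+\gamma_0 y|^2$ and $q:=|\alpha_1 x+\gamma_1 y|^2$, we obtain $P(t,x,y)=(t^2-p-q)^2-4pq$.

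The key step is to compute $F_S$ using the Kronecker-sum structure of $S$. With $M=\begin{pmatrix}\alpha_0 & -\gamma_0\\ -\gamma_0 & -\alpha_0\end{pmatrix}$ and $N=\begin{pmatrix}\alpha_1 & -\gamma_1\\ -\gamma_1 & -\alpha_1\end{pmatrix}$, one checks directly that $S=I_2\otimes M+N\otimes I_2$; taking real and imaginary parts and using that $I_2$ is real gives
\[
x\Re(S)+y\Im(S)=I_2\otimes\bigl(x\Re(M)+y\Im(M)\bigr)+\bigl(x\Re(N)+y\Im(N)\bigr)\otimes I_2.
\]
Because $\gamma_0$ is purely imaginary, $x\Re(M)+y\Im(M)$ is a real, traceless, symmetric $2\times 2$ matrix, so its square is a scalar multiple of $I_2$; a one-line computation identifies that scalar as $p$, whence its eigenvalues are $\pm\sqrt p$. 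Likewise $x\Re(N)+y\Im(N)$ has eigenvalues $\pm\sqrt q$. Since the eigenvalues of a Kronecker sum are the pairwise sums of the eigenvalues of the summands, $x\Re(S)+y\Im(S)$ has eigenvalues $\sqrt p+\sqrt q$, $\sqrt p-\sqrt q$, $-\sqrt p+\sqrt q$, $-\sqrt p-\sqrt q$, and therefore
\[
F_S(t,x,y)=\bigl(t^2-(\sqrt p+\sqrt q)^2\bigr)\bigl(t^2-(\sqrt p-\sqrt q)^2\bigr)=(t^2-p-q)^2-4pq=P(t,x,y).
\]
Together with Theorem~\ref{2(n+1)} this yields $W(S)=W(A)=\ovl{W(T(a,b,c))}$.

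I do not expect a real obstacle here; the proof is a short verification. The only points needing care are recording that $\gamma_0$ and $\gamma_1$ are purely imaginary (so that $x\Re(M)+y\Im(M)$ and $x\Re(N)+y\Im(N)$ are genuinely the real traceless matrices with scalar squares used above) and the bookkeeping in the passage from $S$ to $x\Re(S)+y\Im(S)$. If one prefers to avoid the Kronecker-sum observation, the same polynomial $(t^2-p-q)^2-4pq$ results from expanding the $4\times 4$ determinant $\det(tI_4+x\Re(S)+y\Im(S))$ directly along the first row and using that the multiset of diagonal entries $\{\alpha_0+\alpha_1,\,-\alpha_0+\alpha_1,\,\alpha_0-\alpha_1,\,-\alpha_0-\alpha_1\}$ is invariant under negation, so that the odd elementary symmetric functions of the diagonal entries vanish; this route is more computational but equally elementary.
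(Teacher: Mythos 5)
Your proposal is correct and follows the same route as the paper: specialize $P$ from Proposition~\ref{prop:defP} to $(t^2-p-q)^2-4pq$, verify $F_S=P$, and conclude via Theorem~\ref{2(n+1)}. The paper leaves both steps as ``straightforward computations''; your Kronecker-sum decomposition $S=I_2\otimes M+N\otimes I_2$ is just a clean way of carrying out the second one, and all the details (realness of $\alpha_j$, pure imaginarity of $\gamma_j$, the eigenvalues $\pm\sqrt p\pm\sqrt q$) check out.
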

\begin{proof}
It is a straightforward computation that the polynomial $P$ in Proposition~\ref{prop:defP} equals 
\[
P(t,x,y)=\left(t^2-|\alpha_0 x + \gamma_0 y |^2 - |\alpha_1 x + \gamma_1 y |^2 \right)^2 - 4 |\alpha_0 x + \gamma_0 y |^2 |\alpha_1 x + \gamma_1 y |^2
\]
But a computation also shows that $F_S(t,x,y)=P(t,x,y)$ and hence, by Theorem \ref{2(n+1)}, we have $\ovl{W(T(a,b,c))}=W(S)$.
\end{proof}

We illustrate the above proposition with some examples.

\begin{example}
  Let $a$ be the $2$-periodic sequence with period word $1\ 3$, let $b$ be the constant $0$ sequence and let $c$ be the $2$-periodic sequence with period word $4\ 8$. Then, by Proposition~\ref{prop:2b2}, if
  \[
  S=\begin{pmatrix}
  8 & \frac{1}{2} i  & -\frac{7}{2}i & 0 \\[5pt]
  \frac{1}{2}i & 1 & 0 & -\frac{7}{2}i \\[5pt]
  -\frac{7}{2}i & 0 & -1 &  \frac{1}{2}i\\[5pt]
  0 & -\frac{7}{2}i & \frac{1}{2}i & -8
  \end{pmatrix},
  \]
  then $\ovl{W(T(a,b,c))}=W(S)$. The boundary of the numerical range of $S$ is shown in Figure \ref{fig:irreducible}. The Kippenhahn polynomial of $S$ equals
  \[
P(t,x,y)= t^4 -65t^2 x^2-25 t^2 y^2 +64x^4 +192 x^2 y^2 + 144 y^4.
  \]

The  quartic curve $P(t, x, y) =0$  in the complex projective plane has a pair of ordinary singular  points  of multiplicity  $2$ at $(t, x, y)=(0, 1,  \pm  i  \sqrt{2/3})$ and  there is no other singular point. So the  algebraic curve theory tell us that the homogeneous polynomial $P(t, x, y)$ is  irreducible in the polynomial ring.

\begin{figure}
    \centering
    \includegraphics[scale=0.5]{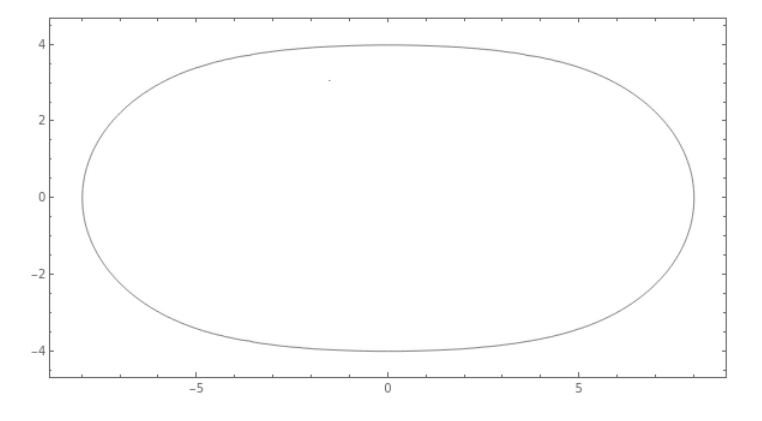}
    \vspace{-0.7cm}
    \caption{Boundary of the numerical range of $S$}
    \label{fig:irreducible}
\end{figure}

Hence, using for example Proposition 2.3 in \cite{GauWu}, there cannot be a matrix $R$ of size $m\times m$, with $1\leq m < 4$ with $W(R)=W(S)$. Incidentally, this shows that the size of the matrix $A$ in Theorem~\ref{2(n+1)} is optimal.
\end{example}

\begin{example}\label{ex:gamma1=0}
Let $a$ and $c$ be real $2$-periodic sequences with period words $a_0 a_1$ and $c_0 c_1$ respectively, and let $b$ be the constant sequence $0$. If $a_0=c_1$, then $\gamma_1=0$ and then, by Proposition~\ref{prop:2b2}, $\ovl{W(T(a, b,c))}=W(S)$, where
\[
S=\begin{pmatrix}
  \alpha_0 + \alpha_1 & -\gamma_0 & 0 & 0 \\
  -\gamma_0 & -\alpha_0 + \alpha_1 & 0 & 0 \\
  0 & 0 & \alpha_0 - \alpha_1 & - \gamma_0 \\
  0 & 0 & - \gamma_0  & -\alpha_0-\alpha_1
  \end{pmatrix}.
\]
But this implies that
\[
\ovl{W(T(a, b,c))}=\conv{W(A+\alpha_1 I) \cup W(A - \alpha_1 I)},
\]
where
\[
A=\begin{pmatrix}
\alpha_0  & - \gamma_0 \\
- \gamma_0 & -\alpha_0  
\end{pmatrix}.
\]
That is, $\ovl{W(T(a,b,c))}$ is the convex hull of two ellipses (possibly degenerate), each one a translation of a single elllipse (possibly degenarate) centered at the origin.
\end{example}

\begin{example}
  Let $a$ be the $2$-periodic sequence with period word $1,-1$, let $b$ be the constant $0$ sequence and let $c$ be the constant $1$ sequence. Then, 
 by Example~\ref{ex:gamma1=0}, we have that $\ovl{W(T(a,b,c))}=\conv{W(A+I) \cup W(A-I)}$, where
 \[
 A=\begin{pmatrix}
 0 & i  \\[5pt]
 i & 0  
\end{pmatrix}.
 \]
 But it is easy to see that $W(A)$ is the closed line segment joining $-i$ and $i$. Hence, $\ovl{W(T(a,b,c))}$ equals the convex hull of the closed line segment joining $-1-i$ and $-1+i$ and the closed line segment joining $1-i$ and $1+i$; i.e., the square with vertices $-1-i$, $-1+i$, $1-i$ and $1+i$, recovering (most of) Theorem 9 in \cite{CN2011}.
\end{example}

\begin{example}\label{ex:gamma0=0}
  Let $a$ and $c$ be real $2$-periodic sequences with period words $a_0 a_1$ and $c_0 c_1$ respectively, and let $b$ be the constant sequence $0$. If $c_0=a_1$, then $\gamma_0=0$ and then, by Proposition~\ref{prop:2b2}, $\ovl{W(T(a, b,c))}=W(S)$, where
\[
S=\begin{pmatrix}
  \alpha_0 + \alpha_1 & 0 & -\gamma_1 & 0 \\
  0 & -\alpha_0 + \alpha_1 & 0 & -\gamma_1 \\
  -\gamma_1 & 0 & \alpha_0 - \alpha_1 & 0 \\
  0 & - \gamma_1 & 0  & -\alpha_0-\alpha_1
  \end{pmatrix}.
\]
But if 
\[
U=\begin{pmatrix}
1 & 0 & 0 & 0\\
0 & 0 & 1 & 0\\
0 & 1 & 0 & 0\\
0 & 0 & 0 & 1
\end{pmatrix},
\]
then 
\[
U^* S U =\begin{pmatrix}
  \alpha_0 + \alpha_1 & -\gamma_1 & 0& 0 \\
  -\gamma_1 & \alpha_0 - \alpha_1 & 0 & 0 \\
  0 & 0 & -\alpha_0 + \alpha_1 &  -\gamma_1 \\
  0 & 0 & - \gamma_1 &  -\alpha_0-\alpha_1
  \end{pmatrix}.
\]

But this implies that
\[
\ovl{W(T(a, b,c))}=\conv{W(A+\alpha_0 I) \cup W(A - \alpha_0 I)},
\]
where
\[
A=\begin{pmatrix}
\alpha_1  & - \gamma_1\\
- \gamma_1 & -\alpha_1  
\end{pmatrix}.
\]
That is, $\ovl{W(T(a,b,c))}$ is the convex hull of two ellipses (possibly degenerate), each one a translation of a single elllipse (possibly degenarate) centered at the origin.
\end{example}


\begin{example}
  Let $a$ be the $2$-periodic sequence with period word $01$, let $b$ be the constant $0$ sequence and let $c$ be the constant $1$ sequence. Then, 
 by Example~\ref{ex:gamma0=0}, we have that $\ovl{W(T(a,b,c))}=\conv{W(A+I) \cup W(A-I)}$, where
 \[
 A=\begin{pmatrix}
\frac{1}{2} & - \frac{1}{2} i \\[5pt]
- \frac{1}{2} i  & -\frac{1}{2}  
\end{pmatrix}.
 \]
 But, if 
 \[
 U=\begin{pmatrix}
 \frac{1}{\sqrt{2}} &  \frac{1}{\sqrt{2}} \\[10pt]
 -\frac{1}{\sqrt{2}} i &  \frac{1}{\sqrt{2}} i 
 \end{pmatrix},
 \]
  then $U$ is unitary and $U^* A U$ equals
 \[
 \begin{pmatrix}
 0 & 1 \\
 0 & 0
 \end{pmatrix}.
 \]
 Therefore, $\ovl{W(T(a,b,c))}=\conv{W(B+I) \cup W(B-I)}$, recovering the result in \cite[Theorem 3.6]{IM}. 
 \end{example}

In the paper \cite{IMN} we explore some sufficient conditions under which the matrix $A$ can be explicitly found, namely if $b=0$ and there is some symmetry in the periodic sequences $a$ and $c$, then the polynomial $P$ can be factored as the product of the Kippenhahn polynomials of two computable matrices, which generalizes the previous four examples.
  
\bibliographystyle{plain}

\end{document}